\numberwithin{equation}{section}
\newtheorem{theorem}{Theorem}[section]
\newtheorem{lemma}[theorem]{Lemma}
\newcommand{\N}{\mathbb{N}}
\newcommand{\R}{\mathbb{R}}
\newcommand{\Nn}{\mathbb{N}^n}
\newcommand{\Rn}{\mathbb{R}^n}
\newcommand*\calH{\mathcal{H}}
\newcommand*\calY{\mathcal{Y}}
\newcommand*\calZ{\mathcal{Z}}
\newcommand{\D}{{\rm Dom}\,}
\newcommand{\Cc}{C^\infty_c(\mathbb R^n)}
\def\a{\alpha}
\def\b{\beta}
\def\v{\varphi}
\def\ve{\varepsilon}
\begin{document}
\title[Self-adjoint realization of the harmonic oscillator in polar coordinates]	{Self-adjoint realization of the harmonic oscillator in polar coordinates and some consequences}

\author[K. Stempak]{Krzysztof Stempak}
\address{Krzysztof Stempak, 55-093 Kie\l{}cz\'ow, Poland}    
\email{krz.stempak@gmail.com}

\begin{abstract} 
We consider spectral decomposition of the harmonic oscillator in $\mathbb R^n$ in terms of two different orthonormal bases in $L^2(\mathbb R^n)$ consisting of its 
eigenfunctions. Then, using purely functional analysis tools we provide simple proofs of rotational symmetry of the Hermite projection operators studied by Kochneff, and  Thangavelu's Hecke-Bochner type identity.
\end{abstract}

\subjclass[2020]{Primary 33C50, 47B25; Secondary 33C55.}

\keywords{Harmonic oscillator,  self-adjoint extension, Hermite function, Laguerre function, spherical harmonic.} 
\maketitle

\section{Introduction} \label{sec:1}
The harmonic oscillator 
\begin{equation*}
H =-\Delta + |x|^2 
\end{equation*}
is a model example of an unbounded operator on $L^2(\Rn)$ with discrete spectrum, whose spectral theory is completely understood.
Analysis of $H$, which is an important operator in mathematical physics (known as the quantum oscillator), was performed in numerous papers and monographs, 
see for instance, \cite{RS}, \cite{F2} or \cite{T2}. Initially considered with domain $\D H:=\Cc$, the operator $H$ is symmetric and nonnegative. Additionally, 
$H$ is essentially self-adjoint (this is a consequence of a general theorem, see \cite[Theorem X.28]{RS}), which means that its self-adjoint extension is unique.

The  \textit{multi-dimensional Hermite functions} $\{h_\a\colon \a=(\a_1,\ldots,a_n)\in\Nn\}$, 
\begin{equation*}
h_\a(x) =\prod_{i=1}^n h_{\a_i}(x_i), \qquad x = (x_1,\ldots, x_n)\in\Rn,
\end{equation*}
form an orthonormal basis in $L^2(\Rn)$. In addition, $h_\a$ are eigenfunctions of the differential operator $-\Delta + |x|^2$,
\begin{equation*}
\big(-\Delta+ |x|^2\big)h_\a = \lambda_\a h_\a, \qquad \lambda_\a = n+2|\a|,
\end{equation*}
where $|\a| = \a_1+\ldots+\a_n$ stands for the length of $\a\in\Nn$. Hence, a well known procedure (see \cite[Lemma 1.2.2]{Dav}) shows that 
$\calH$ defined by
\begin{align*}
\D\calH&=\{f\in L^2(\Rn)\colon \sum_{\a\in \Nn} |\lambda_\a\langle f,h_\a\rangle|^2 < \infty\},\\
\calH f&=  \sum_{\a\in \Nn}\lambda_\a \langle f, h_\a\rangle h_\a, \qquad f\in \D\calH,
\end{align*} 
is self-adjoint and its spectrum  is discrete and equals $\{n + 2k\colon k\in\mathbb N\}$. Since $\Cc\subset \D\calH$, it follows that $\calH$ is a self-adjoint 
extension of $H$.

Less known is a different realization of $\calH$ given in terms of another orthonormal basis of $L^2(\Rn)$ consisting of eigenfunctions of  $-\Delta + |x|^2$. 
See Sommen \cite{So} or Coulembier et al. \cite{CBS}. The basis in question is
\footnote{\textcolor[rgb]{1,0,0}{$\heartsuit$} If $n=1$, then $s\in\N$ is replaced by $s\in\{0,1\}$. This replacement is assumed throughout; see  Section~\ref{sec:4}, (D).}
$$
\phi_{k,s,j}(x):=\ell^{\frac n2-1+s}_k(|x|)Y_{s,j}(x),\qquad k\in\N,\quad s\in\N, \quad j=1,\ldots,d_s.
$$
Here $\ell^\beta_k$ stands for the $k$th Laguerre function (of convolution type) of order $\beta>-1$, $\{Y_{s,j}\}_{j=1}^{d_s}$ is a fixed orthonormal basis 
in the space of the solid harmonic polynomials homogeneous of degree $s$ in $\Rn$,  and $d_s$ is the dimension of this space. Additionally, one has
\begin{equation}\label{bb}
(-\Delta + |x|^2)\phi_{k,s,j}=\lambda_{k,s}\phi_{k,s,j}, \qquad \lambda_{k,s}=n+2(s+2k).
\end{equation}
Therefore, $\widetilde{\calH}$ defined by
\begin{align*}
\D\widetilde{\calH}&=\{f\in L^2(\Rn)\colon \sum_{k,s,j} |\lambda_{k,s}  \langle f,\phi_{k,s,j}  \rangle|^2 < \infty\},\\
\widetilde{\calH} f&=  \sum_{k,s,j}\lambda_{k,s}\langle f, \phi_{k,s,j}\rangle \phi_{k,s,j}, \qquad f\in \D\widetilde{\calH},
\end{align*} 
is a self-adjoint operator on $L^2(\Rn)$. It is easily seen that $\Cc\subset \D\,\widetilde{\calH}$ (see Section \ref{sec:4}, (A), for details) and hence 
$\widetilde{\calH}$ is an extension of $H$. Since $H$ is essentially self-adjoint it follows that $\widetilde{\calH}=\calH$. This important equality has 
some interesting consequences which we discuss below. 

We mention that the system $\{\phi_{k,s,j}\}$ was used by Ciaurri and Roncal \cite{CR} to define and investigate a Riesz transform for the harmonic 
oscillator in the setting of polar coordinates. 

To allow the reader concentrate on the main line of thoughts we decided to put some explanatory facts in the Appendix, Section \ref{sec:4}.

\textbf{Notation}. We shall write $\langle\cdot,\cdot\rangle_{L^2(X)}$ to denote the canonical inner product in $L^2(X)$, but for $X=\Rn$ 
with Lebesgue measure we shall skip the relevant subscript writing simply $\langle\cdot,\cdot\rangle$. The symbol $\sigma_{n-1}$ will stand 
for the surface measure on the unit sphere  $\Sigma_{n-1}=\{|x|=1\}$ in $\Rn$, so that Lebesgue measure in $\Rn$ is given in polar coordinates 
by $dx=r^{n-1}drd\sigma_{n-1}(x')$. Throughout, writing $x=rx'$ where $r=|x|$ and $x'=x/|x|$, will denote the representation of  $0\neq x\in\Rn$ 
in polar coordinates. Finally, $\N=\{0,1,\ldots\}$ and $\lfloor \cdot \rfloor$ will denote the floor function.

\section{Preliminaries} \label{sec:2}

In this section, to make this note  self-contained, we first collect necessary facts on the systems of Laguerre functions and spherical harmonics.
We refer to Section \ref{sec:4}, (D), where the case of spherical harmonics in dimension one is discussed separately.

Let $\b>-1$. The Laguerre functions
$$
\ell^\b_k(r)=\Big(\frac{2\Gamma(k+1)}{\Gamma(k+\b+1)}\Big)^{1/2}L^\b_k(r^2)e^{-r^2/2}, \qquad k\in\N,\quad r>0,
$$
where $L_k^\b$ stands for the Laguerre polynomial of order $\b>-1$ and degree $k$ (see \cite[p.76]{L}), form an orthonormal basis in 
$L^2\big(r^{2\b+1}\big):=L^2\big((0,\infty),r^{2\b+1}dr\big)$ and satisfy
\begin{equation}\label{Lag}
\Big(-\frac{d^2}{dr^2}-\frac{2\b+1}{r}\frac d{dr}+r^2\Big)\ell^\b_k=2(2k+\b+1)\ell^\b_k.
\end{equation}

A comprehensive presentation of the theory of spherical harmonics can be found in \cite[Chapter IV]{SW} or \cite[Chapter 2, H]{F}. Let $n\ge1$ be fixed. 
We apply the convention that if $Y$ is a solid harmonic in $\Rn$, then its restriction to $\Sigma_{n-1}$ will be denoted $\calY$ and called the 
spherical harmonic corresponding to $Y$. $H_{(s)}$ will stand for the space of solid harmonics homogeneous of degree $s\in\N$ in $\Rn$.  
Thus, if $Y\in H_{(s)}$, then $Y(rx')=r^s\calY(x')$. We shall write $\calH_{(s)}$ for the space of restrictions of $Y\in H_{(s)}$ to $\Sigma_{n-1}$; 
$\calH_{(s)}$ is a finite dimensional subspace of the Hilbert space $L^2(\Sigma_{n-1}):=L^2(\Sigma_{n-1},\sigma_{n-1})$. 

Let $d_{s}:={\rm dim}\,H_{(s)}={\rm dim}\,\calH_{(s)}$. Recall (see the proof of
\cite[(2.55) Corollary]{F} with slightly different notation) that $d_s={\rm dim}\,P_s-{\rm dim}\,P_{s-2}$, where $P_s$ stands for the space of homogeneous  polynomials of degree $s$ in $\Rn$. For any orthonormal basis $\calY_{s,1},\ldots,\calY_{s,d_s}$ of $\calH_{(s)}$ and $x',y'\in \Sigma_{n-1}$ it holds (see \cite[(2.57) Theorem, a) and b)]{F})
\begin{equation}\label{zon}
\sum_{j=1}^{d_s}\calY_{s,j}(x')\overline{\calY_{s,j}(y')}=\calZ^{x'}_s(y').
\end{equation}
Here $\calZ^{x'}_s\in \calH_{(s)}$ is the \textit{zonal harmonic of degree} $s$ \textit{with pole at} $x'$, which means that  for all $\calY\in \calH_{(s)}$
it holds $\calY(x')=\langle \calY,\calZ^{x'}_s\rangle_{L^2(\Sigma_{n-1})}$.

The following result is known and the proof is given only for completeness. 
\begin{lemma} \label{lem:ell}
The system $\{\phi_{k,s,j}\}$ is an orthonormal basis in $L^2(\Rn)$ and \eqref{bb} holds.
\end{lemma}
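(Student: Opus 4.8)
The plan is to separate variables in polar coordinates. Write $x=rx'$ with $r=|x|$, $x'=x/|x|$, put $\beta:=\tfrac n2-1+s$ (so $\beta$ depends on $s$) and record the arithmetic identity $2\beta+1=2s+n-1$. Since $Y_{s,j}$ is homogeneous of degree $s$, $Y_{s,j}(x)=r^s\calY_{s,j}(x')$, hence
\begin{equation*}
\phi_{k,s,j}(x)=\ell^\beta_k(r)\,r^s\,\calY_{s,j}(x').
\end{equation*}
Using $dx=r^{n-1}\,dr\,d\sigma_{n-1}(x')$, the inner product $\langle\phi_{k,s,j},\phi_{k',s',j'}\rangle$ factors (the Laguerre functions being real-valued) as
\begin{equation*}
\Big(\int_0^\infty\ell_k^{\,n/2-1+s}(r)\,\ell_{k'}^{\,n/2-1+s'}(r)\,r^{s+s'+n-1}\,dr\Big)\cdot\Big(\int_{\Sigma_{n-1}}\calY_{s,j}(x')\overline{\calY_{s',j'}(x')}\,d\sigma_{n-1}(x')\Big).
\end{equation*}
The angular factor vanishes unless $s=s'$, since $\calH_{(s)}\perp\calH_{(s')}$ in $L^2(\Sigma_{n-1})$ for $s\ne s'$, and equals $\delta_{jj'}$ when $s=s'$ by orthonormality of $\calY_{s,1},\dots,\calY_{s,d_s}$; and when $s=s'$ the radial factor becomes $\int_0^\infty\ell^\beta_k\ell^\beta_{k'}\,r^{2\beta+1}\,dr=\delta_{kk'}$ because $\{\ell^\beta_k\}_k$ is orthonormal in $L^2(r^{2\beta+1})$. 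This proves orthonormality.

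For completeness I would use the orthogonal decomposition $L^2(\Rn)=\bigoplus_{s,j}\calM_{s,j}$, where $\calM_{s,j}$ is the closed subspace of $L^2(\Rn)$ consisting of the functions $g(|x|)\calY_{s,j}(x/|x|)$ belonging to $L^2(\Rn)$; this decomposition is immediate from the fact that $\{\calY_{s,j}\}$ is an orthonormal basis of $L^2(\Sigma_{n-1})$ together with Fubini's theorem. On each sector the map $U_{s,j}\colon L^2(r^{2\beta+1})\to\calM_{s,j}$, $h\mapsto h(r)\,r^s\,\calY_{s,j}(x')$, is unitary: it is isometric by the computation already used, and it is onto because, using $2\beta+1=2s+n-1$,
\begin{equation*}
\int_0^\infty\Big|\frac{g(r)}{r^s}\Big|^2 r^{2\beta+1}\,dr=\int_0^\infty|g(r)|^2\,r^{n-1}\,dr,
\end{equation*}
so $g(|x|)\calY_{s,j}(x/|x|)\in L^2(\Rn)$ forces $h:=g/r^s\in L^2(r^{2\beta+1})$ with $U_{s,j}h=g(|x|)\calY_{s,j}(x/|x|)$. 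Since $\{\ell^\beta_k\}_k$ is an orthonormal basis of $L^2(r^{2\beta+1})$, the image $\{U_{s,j}\ell^\beta_k\}_k=\{\phi_{k,s,j}\}_k$ is an orthonormal basis of $\calM_{s,j}$, and taking the union over $s,j$ shows $\{\phi_{k,s,j}\}$ is an orthonormal basis of $L^2(\Rn)$.

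To check \eqref{bb} I would first note that $\ell^\beta_k(r)$ equals $L^\beta_k(r^2)e^{-r^2/2}$ up to a positive constant, hence is a smooth function of $r^2=|x|^2$; therefore $\phi_{k,s,j}$ is a polynomial in $x$ times $e^{-|x|^2/2}$, in particular of class $C^\infty(\Rn)$, so $-\Delta+|x|^2$ may be applied classically everywhere. Write $\phi_{k,s,j}=\psi(|x|)Y_{s,j}$ with $\psi=\ell^\beta_k$. Since $Y_{s,j}$ is harmonic, the Leibniz rule gives $\Delta(\psi Y_{s,j})=(\Delta\psi)\,Y_{s,j}+2\,\nabla\psi\cdot\nabla Y_{s,j}$; here $\Delta\psi=\psi''+\tfrac{n-1}{r}\psi'$, while $\nabla\psi=\psi'(r)\,x'$ is radial and Euler's relation $x\cdot\nabla Y_{s,j}=sY_{s,j}$ gives $\nabla\psi\cdot\nabla Y_{s,j}=\tfrac{\psi'(r)}{r}\,(x\cdot\nabla Y_{s,j})=\tfrac{s}{r}\psi'(r)\,Y_{s,j}$. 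Hence
\begin{equation*}
(-\Delta+|x|^2)\phi_{k,s,j}=\Big(-\frac{d^2}{dr^2}-\frac{2s+n-1}{r}\frac{d}{dr}+r^2\Big)\ell^\beta_k\cdot Y_{s,j},
\end{equation*}
and since $2s+n-1=2\beta+1$, \eqref{Lag} identifies the right-hand side with $2(2k+\beta+1)\,\ell^\beta_k\,Y_{s,j}$. As $2(2k+\beta+1)=2(2k+\tfrac n2+s)=n+2(s+2k)=\lambda_{k,s}$, this is $\lambda_{k,s}\phi_{k,s,j}$. The same argument applies when $n=1$ under the convention of the $\heartsuit$ footnote (compare Section~\ref{sec:4}, (D)).

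I expect no serious obstacle: the two inner-product evaluations are routine, and the only spots needing a little care are the surjectivity of $U_{s,j}$ (handled by the displayed integral identity, which forces $g/r^s\in L^2(r^{2\beta+1})$) and the admissibility of applying $\Delta$ at the origin (handled by the smoothness of $\phi_{k,s,j}$).
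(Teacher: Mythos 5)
Your proof is correct and follows essentially the same route as the paper: orthonormality by factoring the inner product in polar coordinates, completeness by combining Parseval for the spherical harmonics with Parseval for the Laguerre system after the substitution $g\mapsto g/r^s$ (which converts the weight $r^{n-1}$ into $r^{2\beta+1}$), and the eigenfunction identity via harmonicity, the radial Laplacian, Euler's relation, and \eqref{Lag}. The only difference is cosmetic — you package the completeness step as a direct-sum decomposition with unitaries $U_{s,j}$ rather than verifying Parseval's identity directly — and your explicit remark about smoothness at the origin is a small welcome addition.
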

\begin{proof}
Using integration in polar coordinates, orthogonality and the normalization of the system  easily follows from the fact that for every 
$s\in \N$, $\{\ell^{\frac n2-1+s}_k\colon k\in\N\}$ and $\{Y_{s,j}\colon s\in\N, 1\le j\le d_s\}$ are orthonormal systems in $L^2(r^{2(\frac n2-1+s)+1})$ 
and $L^2(\Sigma_{n-1})$, respectively. We note that for $n=1$  integration in `polar coordinates' takes a special form, see Section~\ref{sec:4}, (D); this remark also applies in the remaining part of the proof. 

It remains to verify completeness. For this we check that Parseval's identity
\begin{equation}\label{aa}
\|f\|_{L^2(\Rn)}^2=\sum_{k,s,j}\big|\langle f,\phi_{k,s,j}\rangle \big|^2, \qquad f\in  L^2(\Rn),
\end{equation}
is satisfied; this will be a simple consequence of Parseval's identities for the orthonormal bases $\{\ell^{\b}_k\colon k\in\N\}$ and  
$\{\calY_{s,j}\colon s\in\N, 1\le j\le d_s\}$ in $L^2(r^{2\b+1})$ and  $L^2(\Sigma_{n-1})$, respectively. Indeed, denoting $f_r(x')=f(rx')$, $x'\in \Sigma_{n-1}$, we obtain
\begin{align*}
\int_{\Rn}|f|^2dx=\int_0^\infty\int_{\Sigma_{n-1}}|f(rx')|^2d\sigma(x')r^{n-1}dr&=
\int_0^\infty\sum_{s,j}|\langle f_r,\calY_{s,j}\rangle_{L^2(\Sigma_{n-1})}|^2r^{n-1}dr\\
&=\sum_{s,j}\int_0^\infty|\langle f_r,\calY_{s,j}\rangle_{L^2(\Sigma_{n-1})}|^2r^{n-1}dr.
\end{align*}
With $s$ and $j$ fixed, consider the function $F_{s,j}(r)=\langle f_r,\calY_{s,j}\rangle_{L^2(\Sigma_{n-1})}$ and write
$$
F_{s,j}(r)=r^sG_{s,j}(r) \quad {\rm with}\quad G_{s,j}(r)=r^{-s}F_{s,j}(r).
$$
Then 
\begin{align*}
\int_0^\infty|\langle f_r,\calY_{s,j}\rangle_{L^2(\Sigma_{n-1})}|^2r^{n-1}dr=\int_0^\infty|F_{s,j}(r)|^2r^{n-1}dr&=\int_0^\infty|G_{s,j}(r)|^2r^{2(\frac n2-1+s)+1}dr\\
&=\sum_k \big|\langle G_{s,j},\ell_k^{\frac n2-1+s}\rangle_{L^2(r^{n-1+2s})}\big|^2.
\end{align*}
But 
\begin{align*}
\langle G_{s,j},\ell_k^{\frac n2-1+s}\rangle_{L^2(r^{n-1+2s})}
&=\int_0^\infty r^{-s}\int_{\Sigma_{n-1}} f(rx')\overline{\calY_{s,j}(x')}d\sigma(x')\ell_k^{\frac n2-1+2s}(r)r^{n-1+2s}dr\\
&=\int_0^\infty\int_{\Sigma_{n-1}} f(rx')\ell_k^{\frac n2-1+2s}(r)r^s\overline{\calY_{s,j}(x')}d\sigma(x')r^{n-1}dr\\
&=\langle f,\phi_{k,s,j}\rangle .
\end{align*}
Combining the above finally gives \eqref{aa}. 

Proving \eqref{bb} we shall use the differential properties of the Laguerre functions $\ell^\b_k$, see \eqref{Lag}, and harmonicity and homogeneity of $Y_{s,j}$.
We first note that for $F(x)=f(|x|)$ with $f(r):=\ell^{\frac n2-1+s}_k(r)$ and $Y(x):=Y_{s,j}(x)$, since $Y$ is harmonic, we obtain
$$
-\Delta(FY)=(-\Delta F)Y-2\nabla F\cdot\nabla Y.
$$
But $F$ is radial and hence, with $r=|x|$, 
\begin{align*}
(-\Delta+|x|^2)F(x)&=\Big(-\frac{d^2}{dr^2}-\frac{n-1}{r}\frac d{dr}+r^2\Big)f(r)\\
&=\Big(-\frac{d^2}{dr^2}-\frac{2(\frac n2-1+s)+1}{r}\frac d{dr}+r^2\Big)f(r)+\frac{2s}r\frac d{dr}f(r)\\
&=(n+2(s+2k))f(r)+\frac{2s}r\frac d{dr}f(r),
\end{align*}
where in the last step \eqref{Lag} was used. 
To conclude verification of  \eqref{bb} we observe that
$$
\frac{s}rf'(r)Y(x)=\nabla F(x)\cdot\nabla Y(x).
$$
Indeed, $\nabla F(x)=\frac{f'(r)}r x$ and $x\cdot\nabla Y(x)=sY(x)$ since $Y$ is  homogeneous of degree $s$.
\end{proof}

\section{Main results} \label{sec:3}

Let $\Pi_{n+2m}$ stand for the orthogonal projection operator corresponding to the eigenvalue $n+2m$ and associated to $\calH$, 
$$
\Pi_{n+2m}f=\sum_{|\a|=m}\langle f,h_\a\rangle h_\a, \qquad f\in L^2(\Rn),
$$
so that the spectral decomposition of $\calH$ is
$$
\calH f= \sum_{m=0}^\infty (n+2m)\Pi_{n+2m}f, \qquad f\in \D \calH.
$$
Clearly, the integral kernel of $\Pi_{n+2m}$ is 
$$
\Phi_m(x,y)=\sum_{|\a|=m}h_\a(x)h_\a(y), \qquad x,y\in\Rn.
$$ 

Analogously, for the parallel realization of $\calH$, which we denoted  $\widetilde{\calH}$, 
\begin{equation}\label{eq2}
\widetilde{\Pi}_{n+2m}f=\sum_{0\le k\le \lfloor m/2\rfloor }\sum_{1\le j\le d_{m-2k}} \langle f,\phi_{k,m-2k,j}\rangle \phi_{k,m-2k,j}, \qquad f\in L^2(\Rn),
\end{equation}
so that 
$$
\widetilde{\calH} f= \sum_{m=0}^\infty (n+2m)\widetilde{\Pi}_{n+2m}f, \qquad f\in \D \widetilde{\calH},
$$
holds. The integral kernel of $\widetilde{\Pi}_{n+2m}$, expressed in polar coordinates,  is 
\begin{align*}
\widetilde{\Phi}_m(rx',uy')&=\sum_{0\le k\le \lfloor m/2\rfloor }\sum_{1\le j\le d_{m-2k}}\phi_{k,m-2k,j}(x)\overline{\phi_{k,m-2k,j}(y)}\\
&=\sum_{0\le k\le\lfloor m/2\rfloor}\ell^{\frac n2-1+m-2k}_k(r)\ell^{\frac n2-1+m-2k}_k(u)(ru)^{m-2k}\sum_{1\le j\le d_{m-2k}}\calY_{m-2k,j}(x')\overline{\calY_{m-2k,j}(y')}\\
&=\sum_{0\le k\le \lfloor m/2\rfloor } \ell^{\frac n2-1+m-2k}_k(r)\ell^{\frac n2-1+m-2k}_k(u)(ru)^{m-2k}\calZ^{x'}_{m-2k}(y'),
\end{align*}
where $\calZ^{x'}_{m-2k}$ is the zonal harmonic of degree $m-2k$ with pole at $x'$; we used \eqref{zon} in the last step.

For any self-adjoint operator on a Hilbert space its spectral decomposition is uniquely determined, see e.g. \cite[Theorem 5.7]{Sch}. Therefore, since 
$\calH=\widetilde{\calH}$, we have 
\begin{equation}\label{eq}
\Pi_{n+2m}=\widetilde{\Pi}_{n+2m}, \qquad m\in\N; 
\end{equation}
notably $\Phi_m=\widetilde{\Phi}_m$. An explanation of \eqref{eq} based on elementary means is contained in Section \ref{sec:4}, (B). 

\subsection{Rotational symmetry of projection operators} \label{ssec:4}
This property, to be precise for $g\in SO(n)$ only, was proved by Kochneff \cite{K} and required some effort in the proof.
\begin{theorem}\label{thm:zero} $($\cite[Theorem 3.4]{K}$)$
Let $T_g f(x)=f(gx)$ for $g\in O(n)$. Then we have 
\begin{equation}\label{com}
\Pi_{n+2m}\circ T_g=T_g\circ  \Pi_{n+2m}\qquad m\in\N,\quad g\in O(n). 
\end{equation} 
\end{theorem}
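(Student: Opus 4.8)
The plan is to derive \eqref{com} from the essential self-adjointness of $H$, with no computation involving Hermite or Laguerre functions. First I would note that for $g\in O(n)$ the operator $T_g$ is unitary on $L^2(\Rn)$ (Lebesgue measure is $O(n)$-invariant) and maps $\Cc$ onto itself. Since the Laplacian commutes with orthogonal linear substitutions and $|gx|^2=|x|^2$, one checks at once that $T_g H T_g^{-1}=H$ as operators with common domain $\Cc=\D H$.

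Next I would transport the self-adjoint extension. The operator $T_g\calH T_g^{-1}$, with domain $T_g(\D\calH)$, is self-adjoint, being the conjugate of a self-adjoint operator by a unitary, and it extends $T_gHT_g^{-1}=H$. As $H$ is essentially self-adjoint, its self-adjoint extension is unique, so $T_g\calH T_g^{-1}=\calH$; equivalently $T_g$ commutes with $\calH$ (and, in particular, $T_g(\D\calH)=\D\calH$).

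Finally I would pass to the spectral projections, exactly as in the derivation of \eqref{eq}. Conjugating the spectral decomposition $\calH=\sum_{m}(n+2m)\Pi_{n+2m}$ by the unitary $T_g$ yields $\calH=T_g\calH T_g^{-1}=\sum_{m}(n+2m)\,T_g\Pi_{n+2m}T_g^{-1}$, and the operators $T_g\Pi_{n+2m}T_g^{-1}$ are again mutually orthogonal projections summing to the identity; by uniqueness of the spectral decomposition of a self-adjoint operator (already used above for \eqref{eq}) we conclude $T_g\Pi_{n+2m}T_g^{-1}=\Pi_{n+2m}$, which is \eqref{com}. An alternative, in the spirit of the kernel computation just carried out, is to use \eqref{eq} to write $\Pi_{n+2m}=\widetilde{\Pi}_{n+2m}$ and then observe that the explicit kernel $\widetilde{\Phi}_m(rx',uy')$ is a finite sum of terms each of which is a product of a function of $r$ and $u$ only, the factor $(ru)^{m-2k}$, and the zonal harmonic $\calZ^{x'}_{m-2k}(y')$, which depends on $x'$ and $y'$ only through $x'\cdot y'$; hence $\widetilde{\Phi}_m(gx,gy)=\widetilde{\Phi}_m(x,y)$ for every $g\in O(n)$, and this invariance of the integral kernel under the diagonal action is equivalent to \eqref{com} because $g$ has unit Jacobian.

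I do not anticipate a genuine obstacle. The only points needing a little care are the domain bookkeeping in the conjugation step (which the uniqueness argument itself settles), and --- should one prefer the kernel route --- the verification that $\calZ^{gx'}_s(gy')=\calZ^{x'}_s(y')$; this follows from \eqref{zon} together with the facts that $\calY\mapsto\calY\circ g$ is a unitary self-map of $\calH_{(s)}$ and that the right-hand side of \eqref{zon}, being the reproducing kernel of $\calH_{(s)}$, does not depend on the choice of orthonormal basis. Either way, the argument works for all of $O(n)$, so it recovers Kochneff's $SO(n)$ statement with no extra effort.
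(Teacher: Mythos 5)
Your proof is correct, and its overall strategy---reduce \eqref{com} to the commutation relation \eqref{comm} and then pass to the spectral projections---is the same as the paper's. Where you genuinely diverge is in how \eqref{comm} is established: the paper states that it ``requires a formal proof'' and outsources it (Section~\ref{sec:4}, (C)) to yet another realization of $\calH$ via a sesquilinear form, adapting \cite[Proposition D.3]{S} from finite reflection groups to all of $O(n)$. You instead note that $T_g\calH T_g^{-1}$ is a self-adjoint extension of $T_gHT_g^{-1}=H$ and conclude $T_g\calH T_g^{-1}=\calH$ from the uniqueness of the self-adjoint extension of the essentially self-adjoint $H$; this is a complete, self-contained argument using only facts already invoked in the paper's introduction, and it is arguably the cleaner route. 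In the final step the paper cites \cite[Proposition 5.15]{Sch} (a bounded operator commuting with a self-adjoint operator commutes with its spectral projections), whereas you conjugate the spectral decomposition and appeal to its uniqueness, exactly as the paper does for \eqref{eq}; the two devices are interchangeable here. Your alternative kernel argument (diagonal $O(n)$-invariance of $\widetilde{\Phi}_m$ via the basis-independence of the zonal harmonic) is also sound and is closer in spirit to Kochneff's original computation, though it is not needed once the operator-theoretic argument is in place.
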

But \eqref{com}  is just a simple consequence of the spectral theorem. More precisely, if on a Hilbert space a bounded operator $B$ commutes with 
a self-adjoint operator $S$ (for an unbounded $S$ this means the inclusion $BS\subset SB$), then $B$ commutes with all spectral projections from 
the spectral decomposition of $S$. See e.g. \cite [Proposition 5.15]{Sch}, where the mentioned result is included in a more general setting. 

In our framework, since 
\begin{equation}\label{comm} 
\calH\circ T_g=T_g\circ\calH, \qquad g\in O(n),
\end{equation}
therefore \eqref{com} holds. The above commutation is naturally expected because $T_g$ commutes with  $H$ so it should commute with the self-adjoint 
extension of $H$. However, it requires a formal proof which is outlined in Section~\ref{sec:4}, (C).

\subsection{Hecke-Bochner type identity for the Hermite projections} 
This identity was  proved by Thangavelu \cite{T1} (an earlier paper \cite{T0} contains its proof only for $n$ even). A shorter proof was provided by Kochneff 
\cite{K}. In both cases the proofs relied on appropriate evaluations of integrals with Hermite and Laguerre functions involved. 

After adjusting the present notation with that in \cite{T1}, the result is as follows.
\begin{theorem}\label{thm:one} $($\cite[Theorem 2.1]{T1}$)$
Let $f\in L^2(\Rn)$ be of the form $f(x)=f_0(|x|)Y(x)$, where $Y$ is a solid harmonic of homogeneity $M$. Then, for $K\in\N,$, 
\begin{equation}\label{Th}
\Pi_{n+2(M+2K)}f(x)=
\langle f_0,\ell^{\frac n2-1+M}_{K}\rangle_{L^2(r^{\frac n2-1+M})}\ell^{\frac n2-1+M}_{K}(|x|)Y(x),
\end{equation}
and $\Pi_{n+2m}f=0$ when $m$ is not of the form $m=M+2K$ for some $K\in\N$. 
\end{theorem}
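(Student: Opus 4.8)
The plan is to deduce Theorem~\ref{thm:one} directly from the identity $\Pi_{n+2m}=\widetilde{\Pi}_{n+2m}$ established in \eqref{eq}, rather than evaluating integrals with Hermite and Laguerre functions. The point is that the decomposition of a function of the special form $f(x)=f_0(|x|)Y(x)$ against the basis $\{\phi_{k,s,j}\}$ is essentially trivial, because $f$ already carries a pure spherical-harmonic factor. So I would first expand $f$ in the $\phi$-basis and then invoke \eqref{eq2}.

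First I would fix an orthonormal basis $\{\calY_{s,j}\}_{j=1}^{d_s}$ of $\calH_{(s)}$ for each $s$, chosen compatibly with the solid harmonics $Y_{s,j}$ used to define the $\phi_{k,s,j}$. Writing $Y(rx')=r^{M}\calY(x')$ with $\calY$ the restriction of $Y$ to $\Sigma_{n-1}$, and decomposing $\calY=\sum_{j=1}^{d_M}c_j\,\calY_{M,j}$ in $L^2(\Sigma_{n-1})$ (this is a \emph{finite} sum), one gets $f(rx')=r^{M}f_0(r)\sum_{j=1}^{d_M}c_j\calY_{M,j}(x')$. The key computation is then the inner product $\langle f,\phi_{k,s,j}\rangle$. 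Using integration in polar coordinates exactly as in the proof of Lemma~\ref{lem:ell} (with $dx=r^{n-1}dr\,d\sigma_{n-1}$), the angular integral $\int_{\Sigma_{n-1}}\calY_{M,j'}(x')\overline{\calY_{s,j}(x')}\,d\sigma_{n-1}(x')$ forces $s=M$ and $j=j'$ by orthonormality of the spherical harmonics across degrees. What remains is the radial integral
\begin{equation*}
\langle f,\phi_{k,M,j}\rangle=c_j\int_0^\infty f_0(r)\,\ell^{\frac n2-1+M}_k(r)\,r^{M}\,r^{n-1}\,dr=c_j\,\langle f_0,\ell^{\frac n2-1+M}_k\rangle_{L^2(r^{n-1+M})},
\end{equation*}
where I should double-check the weight exponent to match the normalization $L^2(r^{2\beta+1})$ with $\beta=\frac n2-1+M$, i.e. $2\beta+1=n-1+2M$; this bookkeeping (the factor $r^{M}$ versus $r^{2M}$) is the one genuinely fiddly point and I would track it carefully against the display in Theorem~\ref{thm:one}.

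Next I would plug these coefficients into the formula \eqref{eq2} for $\widetilde{\Pi}_{n+2m}$. Because $\langle f,\phi_{k,s,j}\rangle$ vanishes unless $s=M$, the only terms in $\widetilde{\Pi}_{n+2m}f$ that survive are those with $m-2k=M$, i.e. $m=M+2k$. Hence $\widetilde{\Pi}_{n+2m}f=0$ unless $m=M+2K$ for some $K\in\N$, which gives the last assertion of the theorem; and when $m=M+2K$ only the single index $k=K$ contributes, yielding
\begin{equation*}
\widetilde{\Pi}_{n+2(M+2K)}f=\sum_{j=1}^{d_M}c_j\,\langle f_0,\ell^{\frac n2-1+M}_K\rangle_{L^2(r^{n-1+M})}\,\phi_{K,M,j}
=\langle f_0,\ell^{\frac n2-1+M}_K\rangle_{L^2(r^{n-1+M})}\,\ell^{\frac n2-1+M}_K(|x|)\sum_{j=1}^{d_M}c_j\,r^{M}\calY_{M,j}(x'),
\end{equation*}
and the last sum is exactly $Y(x)$ by the choice of the $c_j$. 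Since $\Pi_{n+2m}=\widetilde{\Pi}_{n+2m}$ by \eqref{eq}, this is precisely \eqref{Th}.

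The main obstacle I anticipate is not conceptual but notational: making sure the Laguerre order $\beta=\frac n2-1+M$, the radial power $r^{M}$ coming from the homogeneity of $Y$, and the weight $r^{2\beta+1}=r^{n-1+2M}$ in $L^2(r^{2\beta+1})$ are all reconciled correctly, so that the claimed inner product $\langle f_0,\ell^{\frac n2-1+M}_{K}\rangle_{L^2(r^{\frac n2-1+M})}$ in \eqref{Th} comes out with the intended meaning (I suspect the subscript there is shorthand and should be read as the $L^2$ space with weight $r^{n-1+2M}$, matching $\ell^{\frac n2-1+M}_K\in L^2(r^{2\beta+1})$). A secondary, very minor point is the $n=1$ case, where by the footnote $s$ ranges only over $\{0,1\}$ and integration in ``polar coordinates'' degenerates; this is handled exactly as in Lemma~\ref{lem:ell} and Section~\ref{sec:4}, (D), and requires no new idea. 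Everything else is a direct substitution into \eqref{eq2} together with orthonormality of spherical harmonics and \eqref{eq}.
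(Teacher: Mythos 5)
Your proposal is correct and follows essentially the same route as the paper: the paper likewise deduces \eqref{Th} from \eqref{eq} and \eqref{eq2}, using the polar-coordinate factorization of $\langle f,\phi_{k,s,j}\rangle$ (its identity \eqref{ble}) to kill every term with $m-2k\neq M$ and to identify the single surviving term $k=K$ via orthonormality of $\{\calY_{M,j}\}$ in $\calH_{(M)}$. On your one flagged worry: the correct radial weight is $r^{M}\cdot r^{M}\cdot r^{n-1}=r^{n-1+2M}=r^{2\beta+1}$ with $\beta=\tfrac n2-1+M$ (your displayed $r^{n-1+M}$ is missing one factor $r^{M}$, since both $Y$ and $\phi_{k,M,j}$ contribute $r^{M}$), and the subscript $L^2(r^{\frac n2-1+M})$ in \eqref{Th} is indeed the paper's shorthand for that weighted space, as its footnote $\diamondsuit$ explains.
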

Again, we refer to Section~\ref{sec:4} (D), where the case $n=1$ is commented separately.

To check \eqref{Th} by elementary means we use \eqref{eq} and note that for $g\in L^2(\Rn)$ of the form $g(x)=g_0(|x|)\hat Y(x)$, where $\hat Y$ is a solid harmonic of homogeneity $\hat M$, integrating in polar coordinates gives
\footnote{$\diamondsuit$ We slightly abuse the notation writing the inner product of $f_0$ and $g_0$ in $L^2\big(r^{2(\frac n2-1+\frac{M+\hat M}2)+1}\big)$ in place of $\int_{\Rn}f_0(r)\overline{g_0(r)} r^{n-1+M+\hat{M}}dr$. Notice also that the assumptions imposed on $f_0$ and $g_0$ imply that $r^{(n-1+M)/2}f_0\in L^2((0,\infty),dr)$ and $r^{(n-1+\hat M)/2}g_0\in L^2((0,\infty),dr)$, so $f_0\overline{g_0}\in  L^1(r^{n-1+M+\hat{M}})$.}
\begin{equation}\label{ble}
\langle f,g\rangle=\langle f_0,g_0\rangle_{L^2\big(r^{2(\frac n2-1+\frac{M+\hat M}2)+1}\big)}\langle \calY,\hat{\calY}\rangle_{L^2(\Sigma_{n-1})}
\end{equation}
and thus $\langle f,g\rangle=0$ when $M\neq\hat M$. Therefore, looking at \eqref{eq2} it is clear that given $m$, all $\langle f,\phi_{k,m-2k,j}\rangle$ vanish unless $m-2k=M$ for some $0\le k\le\lfloor m/2\rfloor $, and hence $\Pi_{n+2m}f=0$ when $m$ is not of the form $m=M+2K$ for some $K\in\N$. Now, let $m=M+2K$, 
$K\in\N$. Then, by \eqref{eq2}, \eqref{ble}, and using the fact that $\{\calY_{M,j}\colon 1\le j\le d_M\}$ is an orthonormal basis in $\calH_{(M)}$, we obtain
\begin{align*}
\widetilde{\Pi}_{n+2(M+2K)}f(x)&=\sum_{0\le k\le \lfloor M/2\rfloor +K}\sum_{1\le j\le d_{M+2(K-k)}} \langle f,\phi_{k,M+2(K-k),j}\rangle \phi_{k,M+2(K-k),j}(x)\\
&= \sum_{1\le j\le d_{M}} \langle f,\phi_{K,M,j}\rangle \phi_{K,M,j}(x)\\
&=\langle f_0,\ell^{\frac n2-1+M}_K\rangle_{L^2(r^{\frac n2-1+M})}\ell^{\frac n2-1+M}_K(|x|)\, |x|^M\sum_{1\le j\le d_{M}} \langle \calY,\calY_{M,j}\rangle_{L^2(\Sigma_{n-1})}\calY_{M,j}(x')\\
&=\langle f_0,\ell^{\frac n2-1+M}_K\rangle_{L^2(r^{\frac n2-1+M})}\ell^{\frac n2-1+M}_K(|x|)Y(x).
\end{align*}

\section{Appendix} \label{sec:4}
\noindent (A) To prove  $\Cc\subset \D\,\widetilde{\calH}$ it suffices to check that for any $\v\in \Cc$ and every $N\in\N$ it holds
\begin{equation}\label{est}
|\langle \v,\phi_{k,s,j}\rangle|\le C_{\v,N}\lambda_{k,s}^{-N}, 
\end{equation}
uniformly in $k,s\in \N$ and $j\in\{1,\ldots,d_s\}$. This reduces to checking that 
$$
\langle \v,(-\Delta+|x|^2)\phi_{k,s,j}\rangle=\langle (-\Delta+|x|^2)\v,\phi_{k,s,j}\rangle.
$$
Indeed, since $(-\Delta+|x|^2)\v\in \Cc$, we can repeat this argument to obtain 
$$
\lambda_{k,s}^N\langle \v,\phi_{k,s,j}\rangle=\langle (-\Delta+|x|^2)^N\v,\phi_{k,s,j}\rangle
$$ 
and then \eqref{est} follows. 

Let ${\rm supp}\,\v\subset \{|x|\le R-1\}$ for some $R>1$. Obviously, it suffices to verify that
\begin{equation*}
\langle \v,(-\Delta)\phi_{k,s,j}\rangle=\langle (-\Delta)\v,\phi_{k,s,j}\rangle.
\end{equation*}
But this follows from Green's formula, see e.g. \cite[Theorem D.9, (D.6), p.\,408]{Sch} because $\v$ and  
the directional outward normal derivative $\frac{\partial\v}{\partial\nu}$, vanish on the boundary of $\{|x|\le R\}$.

\vskip0.3cm
\noindent (B) It is certainly pedagogical to deliver a proof of \eqref{eq} by elementary means. For this, given $n=1,2,\ldots$, it suffices to check that 
for any $m\in\N$ the projection spaces
$$
{\rm lin}\,\{h_\a\colon |\a|=m\}\quad {\rm and} \quad {\rm lin}\,\{\phi_{k,m-2k,j}\colon 0\le k\le \lfloor m/2\rfloor , 1\le j\le d_{m-2k} \}
$$
coincide. This is equivalent to checking that the corresponding spaces of polynomials, 
$$
V_m={\rm lin}\,\{H_\a\colon |\a|=m\},
$$ 
and 
$$
\widetilde{V}_m= \quad {\rm lin}\,\{L_{k}^{\frac n2-1+m-2k}(|x|^2)Y_{m-2k,j}(x)\colon 0\le k\le \lfloor m/2\rfloor , 1\le j\le d_{m-2k}\},
$$
coincide. Since $V_m$ and $\widetilde{V}_m$ are finite dimensional in the linear space of all polynomials in $n$ variables, to reach the goal it suffices 
to verify that ${\rm dim}\,V_m={\rm dim}\,\widetilde{V}_m$ and to check $\widetilde{V}_m\subset V_m$, say. The dimension of $V_m$ is the dimension of $P_m$ 
(and equals $\frac{(n-1+m)!}{m!(n-1)!}$); cf. \cite[(2.54) Proposition]{F}. On the other hand, ${\rm dim}\,\widetilde{V}_m=d_m+d_{m-2}+d_{m-4}+\ldots$ 
with the last summand equal to $d_{1}$ or $d_{0}$, depending on the parity of $m$. But, see Section \ref{sec:2}, $d_i={\rm dim}\,P_i-{\rm dim}\,P_{i-2}$ 
(with the convention that ${\rm dim}\,P_{-2}={\rm dim}\,P_{-1}=0$), so that the relevant dimensions indeed coincide. 
For the inclusion choose $L_{k}^{\frac n2-1+m-2k}(|x|^2)Y_{m-2k,j}(x)$ with $0\le k\le \lfloor m/2\rfloor , 1\le j\le d_{m-2k}$. The degree of this polynomial 
is $2k+(m-2k)=m$. Since $H_\a$ with  $|\a|=m$ form an algebraic basis in $P_m$, the chosen function is a linear combination of $H_\a$ with $|\a|=m$. 

\vskip0.3cm
\noindent (C) The proof of \eqref{comm} requires yet another realization of $\calH$, see \cite{S}, where $\D\calH$ is realized as a Sobolev-type space 
and $\calH$ is defined in terms of a sesquilinear form. Then the proof  goes, \textit{mutatis mutandis}, as the proof of \cite[Proposition D.3]{S}, 
where $g$ was restricted to a finite reflection group, a subgroup of $O(n)$. This proof uses \cite[Lemmas D.2 and D.1]{S} and again, their proofs 
are easily adapted to the broader context of $g\in O(n)$.

\vskip0.3cm 
\noindent (D) The results discussed in this note include the case $n=1$ but this requires some comments. 
Considering $n=1$ let us begin with spherical harmonics for $\R$, cf. \cite[p.\,100]{F}. Then $\Sigma_0=\{-1,1\}$, $\sigma_0$ is the counting measure 
on $\{-1,1\}$, and the integration in `polar coordinates' then is $\int_\R f(x)\,dx=\int_0^\infty \sum_{\ve=\pm1}f(\ve r)\,dr$   . The space of solid harmonics is two-dimensional and spanned by 1 and $x$; more precisely, $H_{(0)}={\rm lin}\,\{1\}$, $H_{(1)}={\rm lin}\,\{x\}$, 
and $H_{(s)}=\{0\}$ for $s\ge2$. Moreover, $\phi_{k,0,1}=2^{-1/2}\ell^{-1/2}_k(|x|)$ and $\phi_{k,1,1}=2^{-1/2}\ell^{1/2}_k(|x|)x$, $k\in\N$, form an orthonormal basis in $L^2(\R)$ (here the subscript $s$ is limited to $s=0$ and $s=1$ only). They are eigenfunctions of $-\frac{d^2}{dx^2}+x^2$ with eigenvalues $1+4k$ 
and $3+4k$, respectively; this is easily checked by means of \eqref{Lag}. Thus the projection spaces corresponding to $\Pi_{1+2m}$, $m\in\N$, are one-dimensional and equal $V_{1+2\cdot 2k}={\rm lin}\,\{\phi_{k,0,1}\}$ for $m=2k$, and $V_{1+2(2k+1)}={\rm lin}\,\{\phi_{k,1,1}\}$ for $m=2k+1$, $k\in\N$. Consequently, 
the projection operators have very simple form 
\begin{equation}\label{oone}
\Pi_{1+2m}f=
\begin{cases}
\langle f,\phi_{k,0,1}\rangle \phi_{k,0,1},& m=2k,\\
\langle f,\phi_{k,1,1}\rangle \phi_{k,1,1},& m=2k+1.
\end{cases}
\end{equation}
Since for $n=1$ the admissible $M$ in Theorem \ref{thm:one} is limited  to $M=0$ or $M=1$ with (up to a multiplicative constant) $Y=1$ for $M=0$ or $Y=x$ 
for $M=1$, and $f(x)=f_0(|x|)Y(x)$ means that $f$ is even/odd on $\R$ for $M=0$ or $M=1$, one easily recovers in \eqref{oone} the equality contained in \eqref{Th}.



\begin{thebibliography}{999}
\bibitem{CR} 
Ciaurri, \'O., Roncal, L.: The Riesz transform for the harmonic oscillator in spherical coordinates. Constr. Approx. \textbf{40}, 447--472 (2014) 

\bibitem{CBS} 
Coulembier, K., De Bie, H., Sommen, F.: Orthogonality of Hermite polynomials in superspace and Mehler type formulae. Proc. London
Math. Soc. \textbf{103}, 786--825  (2011)

\bibitem{Dav} 
Davies, E.B.: Spectral theory and differential operators.  Cambridge University Press, Cambridge (1995)

\bibitem{F} 
Folland, G.B.: Introduction to partial differential equations.  Princeton University Press, Princeton (1989)

\bibitem{F2} 
Folland, G.B.: Harmonic analysis in phase space,  2nd edn. Princeton University Press, Princeton (1995)

\bibitem{K} 
Kochneff, E.: Rotational symmetry of the Hermite projection operators, Proc. Amer. Math. Soc. \textbf{124}, 1539--1547 (1996)

\bibitem{L}  
Lebedev, N.N.: Special functions and their applications.  Revised Edition, Dover Publications, New York (1972)

\bibitem{RS} 
Reed, M.,  Simon, B.: Methods of modern mathematical physics. Fourier Analysis, vol. 2. Academic Press, New York (1975)

\bibitem{Sch} 
Schm\"udgen, K.:  Unbounded Self-adjoint Operators on Hilbert Space.  Springer-Verlag, New York, (2012)

\bibitem{So}  
Sommen, F.: Special functions in Clifford analysis and axial symmetry, J. Math. Anal. Appl. \textbf{130},  110--133  (1988)

\bibitem{SW}  
Stein, E.M., Weiss, G.:  Introduction to Fourier analysis on euclidean spaces.  Princeton University Press, Princeton (1971)

\bibitem{S}  
Stempak, K.: Harmonic oscillator on Weyl chambers, J. Funct. Anal. \textbf{286}, 110416 (2024) 

\bibitem{T0} 
Thangavelu, S.: Hermite expansions on $\R^{2n}$ for radial functions.  Revista Math. Iberoamericana \textbf{6}, 61--73 (1990)

\bibitem{T1} 
Thangavelu, S.: Hermite expansions on $\Rn$ for radial functions.  Proc. Amer. Math. Soc. \textbf{118}, 1097--1102 (1993)

\bibitem{T2} 
Thangavelu, S.: Lectures on Hermite and Laguerre expansions.  Mathematical Notes 42, Princeton University Press, Princeton (1993)


\end{thebibliography}
\end{document}